\newtheorem{prop}{Proposition}
\newtheorem{theorem}{Theorem}
\DeclareMathOperator*{\argmax}{arg\,max}
\DeclareMathOperator*{\argmin}{arg\,min}
\title{\textbf{Where does the tail start? Inflection Points and Maximum Curvature as Boundaries}}
\author{Rafael Cabral$^1$, Maria de Iorio$^{1,2}$, Andrea Cremaschi$^3$}
\date{%
    $^1$Department of Paediatrics, Yong Loo Lin School of Medicine, National University of Singapore\\%
    $^2$Singapore Institute for Clinical Sciences, A*STAR\\%
    $^3$School of Science and Technology, IE University, Madrid\\[2ex]%
    \today
}
\begin{document}

\maketitle

\begin{abstract}

Understanding the tail behavior of distributions is crucial in statistical theory. For instance, the tail of a distribution plays a ubiquitous role in extreme value statistics, where it is associated with the likelihood of extreme events. There are several ways to characterize the tail of a distribution based on how the tail function, $\bar{F}(x) = P(X>x)$, behaves when $x\to\infty$. However, for unimodal distributions, where does the core of the distribution end and the tail begin? This paper addresses this unresolved question and explores the usage of delimiting points obtained from the derivatives of the density function of continuous random variables, namely, the inflection point and the point of maximum curvature. These points are used to delimit the bulk of the distribution from its tails. We discuss the estimation of these delimiting points and compare them with other measures associated with the tail of a distribution, such as the kurtosis and extreme quantiles. We derive the proposed delimiting points for several known distributions and show that it can be a reasonable criterion for defining the starting point of the tail of a~distribution.


    
    
    

        
\end{abstract}

\textbf{Keywords:} Distribution tails, Heavy-tailed distributions, Kernel density estimation

\section{Introduction}


In probability theory, the tails of a distribution are commonly studied in the context of the theory of heavy-tailed distributions \citep{foss2011introduction,taleb2020statistical}. Tail behavior is usually described by the tail function $\Bar{F}(x) = P(X>x)$ for  $x\to\infty$. For instance, a distribution is heavy-tailed if the tail function is not exponentially bounded, more precisely, $\lim_{x\to\infty} e^{tx}\Bar{F}(x) = \infty$, $\forall t>0$. There are two important subclasses of heavy-tailed distributions: long-tailed \citep{asmussen2003steady}, and subexponential distributions \citep{teugels1975class}. For an overview, see  \cite{foss2011introduction} and references therein. These distributions are often used to model the occurrence of extreme events in applied fields such as finance and climate science \citep{pisarenko2010heavy,longin2016extreme, cabral2023controlling}. Also, in robust statistics, heavy-tailed distributions can provide more robust inferences since they can reduce the sensitivity of the linear regression estimates with regards to outliers present in the data \citep{huber2004robust, cabral2023fitting}. The tails of the distribution are also important to describe the behavior of shrinkage priors in sparse regression \citep{carvalho2010horseshoe}.

Existing analyses of distribution tails primarily focus on their asymptotic behavior as $x$ approaches infinity, often overlooking what happens before infinity. Also, the tail is not a precisely defined interval of the probability density function (pdf), in the sense that there is no specific point where the bulk of a distribution ends and the tail begins. To our knowledge, there has yet to be a concrete discussion in the literature regarding the definition of a delimiting point for the left tail, $t_l$, or the right tail, $t_r$. This paper presents automatic approaches for selecting these delimiting points. We will concentrate our attention on continuous random variables with support on $\mathbb{R}$ or $\mathbb{R}^+$ and characterized by a unimodal pdf, where the delimiting points will be used to define the bulk or modal region, $[t_l, t_r]$, the proper tail interval $[t_r,\infty]$, and finally the left tail interval $[A,t_l[$ of the pdf, where $A$ can be 0 or $-\infty$ depending on the support of the distribution. These intervals can also be applied to other functions that describe the distribution, such as the tail function $\Bar{F}(x)$ or the hazard function $h(x) = f(x)/\Bar{F}(x)$.




While the terms ``bulk" and ``tail" have not been precisely defined, they are often used informally to describe different distribution regions and convey an intuitive understanding of where the probabilities are concentrated. For unimodal distributions with support on the real line, the bulk or modal region of the pdf colloquially refers to a high-density interval centered around the mode. The bulk contains the values that are more likely to be observed. As we move towards the tails of the distribution, the probability density decreases, indicating that values in the tails are less likely to occur (see Figure \ref{fig:cauchy_intro}). 

\begin{figure}[h]
   \centering
\includegraphics[width=\linewidth]{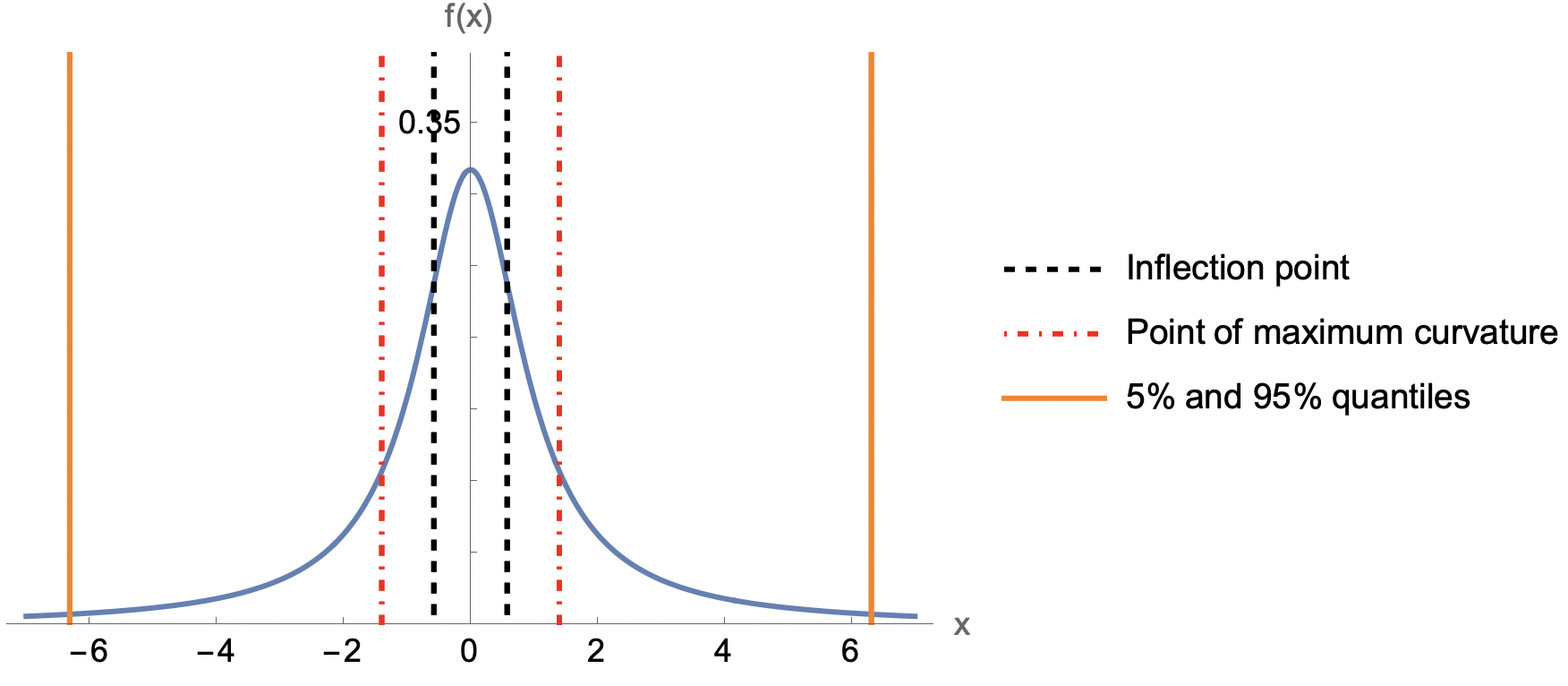}
\caption{Density function of Cauchy distribution (blue line). The inflection point, point of maximum curvature, and (5\% and 95\%) quantiles are marked as vertical dashed, dotted, and continuous lines, respectively.}
\label{fig:cauchy_intro}
\end{figure}

The ``bulk" and ``tail" regions often correspond to different behaviors of the second derivative of the density function. For example, in the case of the Cauchy distribution illustrated in Figure \ref{fig:cauchy_intro}, the density function is concave for $|x|<1/\sqrt{3}$ and behaves like $(1-x^2)/ \pi + \mathcal{O}(x^3)$ near the origin. On the other hand, for $|x|>1/\sqrt{3}$, it becomes convex and behaves like $1/(\pi x^2) + \mathcal{O}(x^{-3})$ as $x$ approaches infinity. The inflection points $x \pm 1/\sqrt{3}$ mark the change from concavity to convexity.

In this paper, we explore different methods that can be used to define modal regions and the tail's starting point based on the pdf's second derivative. The inflection point of the pdf is a natural candidate for many unimodal distributions since it marks the pdf's change from concavity to convexity. This will yield similar modal regions as in \cite{duong2008feature}, which focuses on regions where the pdf is significantly concave. Here, we propose using the point of the maximum curvature and contrast it with other measures related to heavy-tailedness, such as the kurtosis and extreme quantiles for commonly used distributions. A better understanding of the derivatives of density functions not only advances statistical theory but also opens doors for new methodologies for data analysis. For instance, an adequate identification of modal regions is useful in applications such as nonparametric clustering and bump hunting, as discussed in \cite{siloko2019note} and \cite{chacon2013data}.

In statistical data analysis, \cite{duong2008feature} develop a test for the null hypothesis that the hessian of the pdf is positive definite to identify significant concave regions. The method aims to identify multimodality and clusters in the data and is used to identify spatial hotspots in earthquake occurrences and high-density spots for large biological cells in flow cytometry studies. Another area where finding a transition point between the bulk and tail of the distribution is useful is in extreme value theory. It is common to exclude all but the most extreme observations when interest focuses on estimating only tail features, e.g., estimating the tail index $\alpha$\footnote{The tail index $\alpha$ characterizes tail functions with the asymptotic behavior $\bar{F}(x) = x^{-\alpha}L(x)$ as $x\to\infty$, where $L(x)$ is a slowly varying function.} or extrapolating to high quantiles from limited observed data. The Pickands-Balkema-de Haan Theorem \citep{balkema1974residual,pickands1975statistical}  justifies the peaks-over-threshold estimation method, which involves fitting a generalized Pareto distribution (GPD) to observations surpassing a specified threshold. Data-driven threshold selection methods are thus crucial and commonly rely on graphical diagnostic tools or automatic techniques \citep{scarrott2012review}. However, determining the optimal threshold presents challenges, for instance, due to potentially multiple transition regimes to the tail. Various approaches have emerged to estimate the entire density function by employing a mixture model, which considers a separate model for the bulk and a GPD model for the tail, with examples including those proposed by,  \cite{tancredi2006accounting}, \cite{macdonald2011flexible}, and \cite{do2012semiparametric}. However, these methods often rely on heuristic techniques to find the threshold with limited mathematical analysis support.



The paper is organized as follows. In Section \ref{sect:curvature}, we define the curvature of a pdf, the delimiting points between the bulk and tail of the distribution that can be constructed from it, and examine their properties, such as their existence and how they are related. Section \ref{sect:pmcurv} addresses the estimation of these delimiting points from the data, where we provide sample versions of these points and prove their uniform consistency. In Section \ref{sect:examples}, we derive the delimiting points for several known distributions. Lastly, in Section \ref{sect:discussion}, we discuss the main results.




\section{Curvature} \label{sect:curvature}

The concept of curvature is fundamental in mathematics and is widely explored in various fields. In this section, we define the curvature and, based on it, construct useful delimiting points between the bulk and tail of the distribution, namely the point of inflection and maximum curvature. We also present some properties of these delimiting points and examine their existence and how they are related to one another.

Consider the parametric representation $\gamma(t)=(x(t), y(t))$ of a plane curve, assumed to be twice differentiable. We assume the derivative $d\gamma/dt$ is well-defined, differentiable, and not identical to the zero vector across the parametrization domain. Utilizing this parametrization, the signed curvature can be expressed as

$$
\kappa(t)=\frac{x^{\prime}(t) y^{\prime \prime}(t)-y^{\prime}(t) x^{\prime \prime}(t)}{\left((x^{\prime}(t))^2+(y^{\prime}(t))^2\right)^{3 / 2}}
$$
with primes denoting derivatives with respect to $t$. Intuitively, the curvature is the amount by which a curve deviates from being a straight line. Our interest, pertaining to the establishment of a probability density function tail starting point, lies in the graph of a function $y=f(t)$, where $f(t)$ is the density function of a given random variable $X$. This is a specific instance of a parameterized curve defined as
$$
\begin{aligned}
& x(t)=t \\
& y(t)=f(t)
\end{aligned}
$$
Given that the first and second derivatives of $x$ with respect to $t$ are 1 and 0, respectively, the above formula can be simplified to
\begin{equation}\label{eq:curv}
\kappa(t)=\frac{f^{\prime \prime}(t)}{\left(1+f^{\prime}(t)^2\right)^{3 / 2}}
\end{equation}


\subsection{Point of inflection and maximum curvature }

The graph of the differentiable function has an inflection point at $(x, f(x))$ if and only if its first derivative $f'$ has an isolated extremum at $x$. In our context, the inflection point is given by \mbox{$\text{PInf} = \argmax_t |f^{\prime}(t)|$} and can also be found by computing the roots of the second derivative of the pdf. On the other hand, the point of maximum curvature is the point $t$ where the curvature $\kappa(t)$ in \eqref{eq:curv} achieves its highest value: $$\text{PMCurv} = \argmax_t \kappa_X(t).$$
In general, there is no closed-form expression for $\text{PInf}$ and $\text{PMCurv}$, and they need to be computed using numerical optimization algorithms. 

To simplify calculations, the curvature in \eqref{eq:curv} can be approximated by the second derivative of the pdf when the derivative in the denominator is much smaller than 1 in absolute value. Indeed, we have:
$$
k(x)=f''(x)\left(1+O\left(f'(x)^2\right)\right)
$$
In the context of probability distributions, this can happen when we apply a scale transformation $X \to \sigma X$ and $\sigma$ is sufficiently large, since $\pi_{\sigma X}(t) = \pi_{X}(t/\sigma)/\sigma$, and $\pi_{\sigma X}'(t) = \pi_{X}'(t/\sigma)/\sigma^2$ which goes to 0 when $\sigma \to \infty$. Intuitively, the densities become flatter when $\sigma$ increases, with the first derivative going to zero. In this case, $\kappa_X(t) \approx \pi_X^{\prime \prime}(t)$ and the point of maximum curvature can be approximated by the point of maximum convexity (PMConv), which is defined as 
\begin{equation} \label{eq:approximation}
    \text{PMCurv} \approx \text{PMConv} = \argmax_t f^{\prime \prime}(t)
\end{equation}
We now study the behavior of $\text{PInf}_X$, $\text{PMConv}_X$ and $\text{PMCurv}_X$ of a random variable $X$ when applying a location-scale transformation. We have that $\text{PInf}_{\mu+\sigma X} = \mu+\sigma \text{PInf}$, $\text{PConv}_{\mu+\sigma X} = \mu+\sigma \text{PMConv}_{X}$ and  $\text{PMCurv}_{X+\mu} = \text{PMCurv}_{X} + \mu$. On the other hand, in general, $\text{PMCurv}_{\sigma X} \neq \sigma \text{PMCurv}_{X}$. However, under the approximation in equation \eqref{eq:approximation}, which holds for large $\sigma$, we have 

\begin{equation}\label{eq:approx2}
  \text{PMCurv}_{\sigma X} \approx \text{PMConv}_{\sigma X} =  \sigma \text{PMConv}_{X}  
\end{equation}

In Figure~\ref{fig:spmc1}, we show the PMCurv and PMConv for several continuous random variables. In Sections~\ref{sect:pmcurv} and \ref{sect:examples}, we consider the approximation $\text{PMCurv} \approx \text{PMConv}$.

Note that  for unimodal distributions with mode at $\theta$, $f''(x)$  typically possess two local maximums, one at $x<\theta$ relating to the left tail, $\text{PMConv}_l$, and another at $x>\theta$, relating to the right tail, $\text{PMConv}_r$ (see Figure \ref{fig:cauchy_intro}). For symmetric unimodal distributions with zero mean $\text{PMConv}_l = -\text{PMConv}_r$, but generally this is not the case (for example, see the Log-Normal distribution in Section \ref{eq:lognormal}). Thus, it is useful to define 

\begin{equation}\label{eq:sample1}
 \text{PMConv}_l = \argmax_{t<\theta} f^{\prime \prime}(t) \ \ \text{and} \ \ \text{PMConv}_r = \argmax_{t>\theta} f^{\prime \prime}(t)   
\end{equation}
where we assume $f''$ has an isolated maximum and root on each side. Similarly, we define the inflection point associated with the left and right tail:
\begin{equation}\label{eq:sample2}
\text{PInf}_l  = \argmax_{t<\theta} |f^{\prime}(t)| \ \ \text{and} \ \ \text{PInf}_r = \argmax_{t>\theta} |f^{\prime}(t)|
\end{equation}


Moreover, if $\text{PMConv}_r=0$ for a random variable $X$ with support on $\mathbb{R}^+$, then for large enough $\sigma$,$\text{PMCurv}_{\sigma X}$ is also equal to 0 (see equation \ref{eq:approx2}). This happens, for instance, for the exponential, Pareto, Weibull (for rate parameter $\beta=1$), and log-Gamma distributions (see Figure~\ref{fig:spmc1}). The density functions of these distributions have a singularity at 0, as well as their second derivative (thus $\text{PMConv}_r=0$). Furthermore, the second derivative is always positive, and thus there are no inflection points either. For such distributions, which have a mode at 0, the delimiting points we are considering are not useful in the sense of not providing a practical delimiting point between the modal region and the tail of the distribution.

Finally, it is also useful to work with $F(\text{PMConv}_r)$ and $F(\text{PInf}_r)$, the cdf of the distribution evaluated at $\text{PMConv}_r$ and $\text{PInf}_r$ (and likewise for $\text{PMConv}_l$ and $\text{PInf}_l$).  Unlike $\text{PMConv}_r$, which changes under a scale and location transformation, we can use $F(\text{PMConv}_r)$ as a measure that is invariant under scale and location transformations, which facilitates interpretation when comparing different distributions (see Figure~\ref{fig:spmc1} and  \ref{fig:spmc2}). For instance, this measure is about 0.9584 for the Gaussian distribution, while for the Cauchy distribution, it is equal to 0.75. As we will see in Section \ref{sect:examples}, $F(\text{PInf}_r)$ and $F(\text{PConv}_r)$ tend to decrease as the distribution becomes more heavy-tailed for several families, such as the Student's $t$-distribution, which encompasses the Gaussian and Cauchy distributions. We also observe that for the Gaussian distribution, $\text{PConv}_l$ and  $\text{PConv}_r$ are very close to the 5\% and 95\% quantiles, which are commonly used to define extreme values or outliers. 

\begin{figure}[H]
   \centering
   \includegraphics[width=\linewidth]{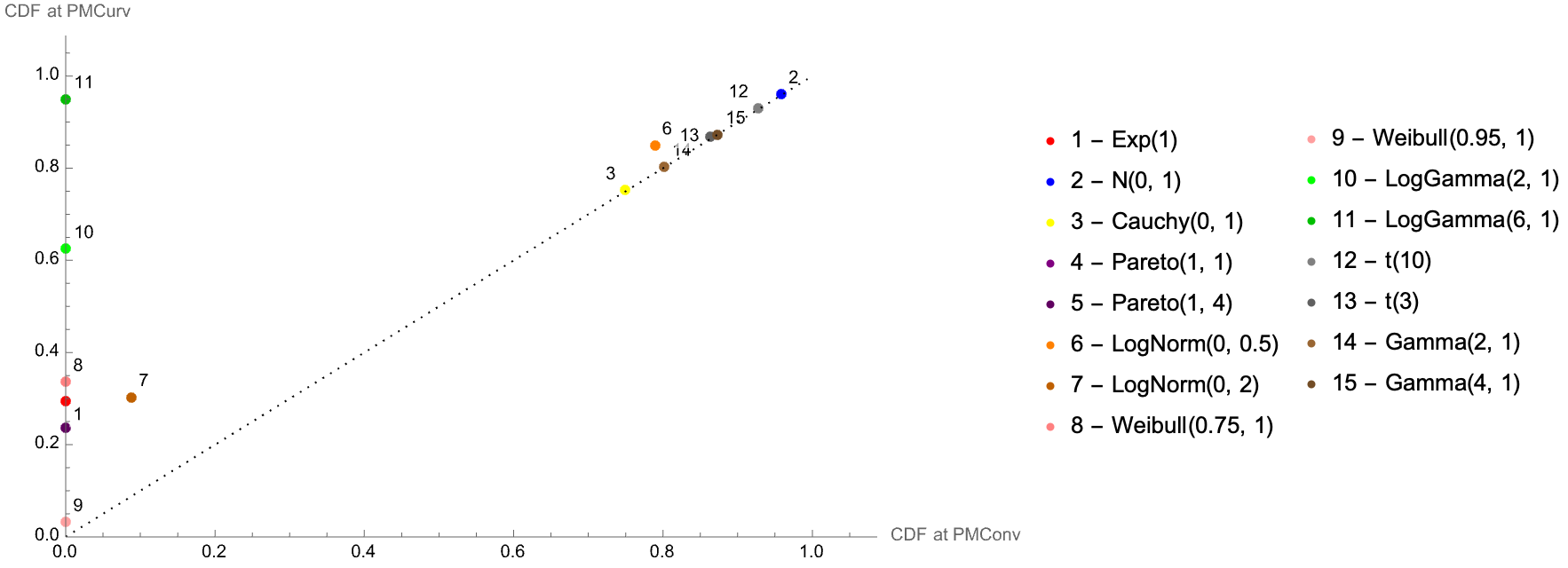}
   \caption{Cdf of $\text{PMCurv}_r$ against cdf of $\text{PConv}_r$ for several distributions.} 
  \label{fig:spmc1} 
\end{figure}

\begin{figure}[H]
   \centering
   \includegraphics[width=\linewidth]{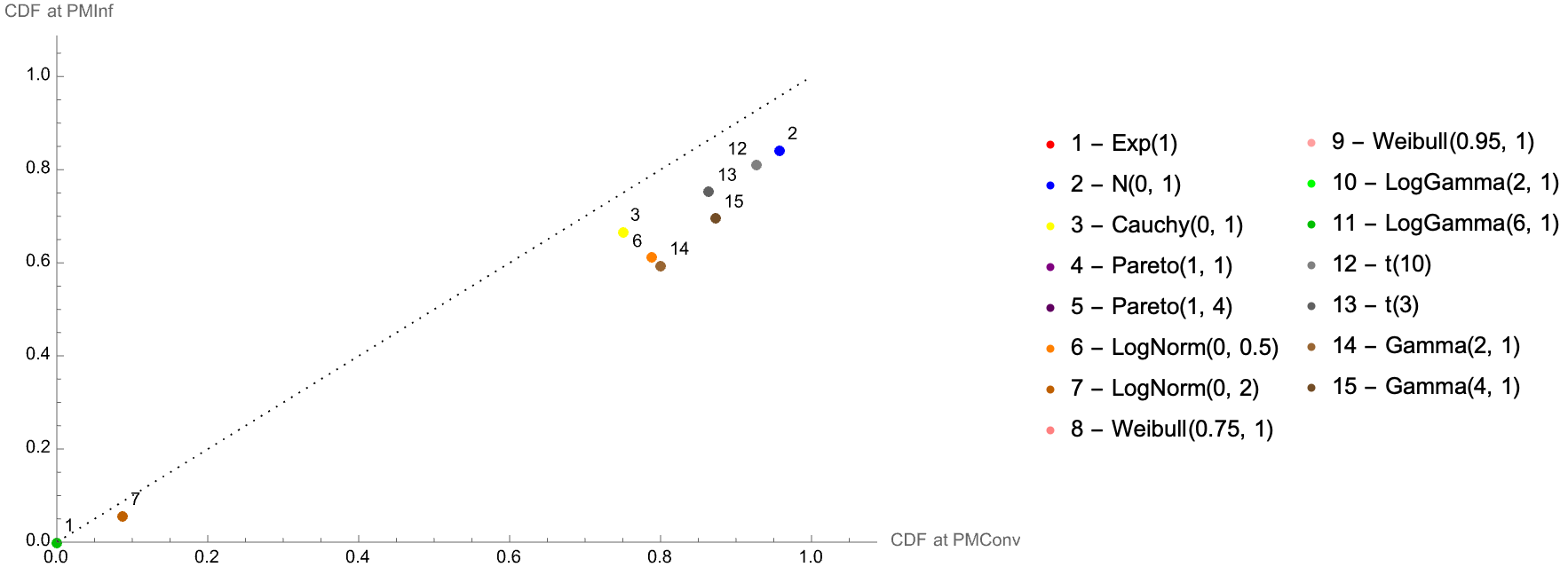}
   \caption{Cdf at $\text{PInf}_r$ against cdf at $\text{PConv}_r$ for several distributions. Many of the distributions do not have $\text{PInf}_r$, and for those, we omit the results. } 
  \label{fig:spmc2} 
\end{figure}

We now discuss the existence of the delimiting points introduced so far and prove their existence in the case of unimodal distributions. 

\begin{theorem}\label{theo:1}
 Let $f(x)$ be the unimodal density function of a continuous random variable with support on the real line, with mode $\theta$, and let $f^{(r)}(x)$ be uniformly continuous for $r=0,1,2$.
 Then, $f(x)$ has at least one inflection point and one point of maximum convexity for $x>\theta$, and similarly for $x<\theta$. 
\end{theorem}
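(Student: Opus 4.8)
The plan is to treat the right side $x>\theta$ in detail, the region $x<\theta$ being entirely symmetric (with the inequalities near the mode reversed and the limit taken at $-\infty$). First I would record the boundary behavior of $f$ and its derivatives. Since $\theta$ is an interior mode of a twice-differentiable density, Fermat's theorem gives $f'(\theta)=0$ and $f''(\theta)\le 0$, while unimodality forces $f$ to be nonincreasing on $(\theta,\infty)$, so $f'\le 0$ there. Because $f$ is nonnegative, nonincreasing and integrable, it has a finite limit at $+\infty$ which must equal $0$, since a positive limit would make $\int_\theta^\infty f$ diverge. This supplies the finite-limit hypothesis I need in the next step.

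The workhorse is Barbalat's lemma: if a differentiable function has a finite limit at infinity and its derivative is uniformly continuous, then the derivative tends to $0$. Applying it to $f$ (limit $0$, with $f'$ uniformly continuous) yields $f'(x)\to 0$; applying it again to $f'$ (now known to have limit $0$, with $f''$ uniformly continuous) yields $f''(x)\to 0$. The uniform-continuity hypotheses in the statement are exactly what make these two applications legitimate, and they are what lets me control the functions at $+\infty$.

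For the inflection point, I would examine $f'$ on $[\theta,\infty)$: it is continuous, vanishes at both ends ($f'(\theta)=0$ and $f'(x)\to 0$), and is strictly negative somewhere, as otherwise $f$ would be constant on the tail, contradicting $f(\theta)>0$ and $f\to 0$. Hence $-f'$ is a nonnegative continuous function tending to $0$ at both ends with strictly positive supremum; every point where it exceeds half that supremum lies in a compact interval (the tail being controlled by $f'\to 0$, the left end by $f'(\theta)=0$), so the supremum is attained at an interior point $x^\ast\in(\theta,\infty)$. This $x^\ast$ is the maximizer of $|f'|$, i.e. $\mathrm{PInf}_r$, and as an interior extremum of the differentiable function $f'$ it satisfies $f''(x^\ast)=0$ with $f''$ passing from nonpositive to nonnegative values, a genuine change of concavity.

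For the point of maximum convexity I would argue directly on $f''$. By the fundamental theorem of calculus and the limits just established, $\int_\theta^\infty f''(x)\,dx=\lim_{b\to\infty}f'(b)-f'(\theta)=0$, and $f''$ is not identically zero (since $f'$ is non-constant), so it must be strictly positive somewhere; thus $M:=\sup_{x>\theta}f''(x)>0$. Since $f''(\theta)\le 0<M$ and $f''(x)\to 0$, the set $\{f''\ge M/2\}$ is again confined to a compact interval, so by continuity $M$ is attained at an interior point, which is $\mathrm{PMConv}_r$. I expect the main obstacle throughout to be precisely this attainment issue—guaranteeing that the relevant extrema do not escape to the boundary $x=\theta$ or drift off to $+\infty$—and it is resolved by combining the decay statements $f'\to 0$ and $f''\to 0$ from Barbalat's lemma with a compactness argument, rather than by any delicate quantitative estimate.
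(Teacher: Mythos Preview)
Your argument is correct and follows the same overall architecture as the paper's proof: establish the decay $f^{(r)}(x)\to 0$ at infinity for $r=0,1,2$, show that $f''$ must be strictly positive somewhere on $(\theta,\infty)$, and conclude the existence of both delimiting points. The differences are in the level of rigor and in two ingredients. First, you invoke Barbalat's lemma explicitly to obtain $f'\to 0$ and $f''\to 0$, whereas the paper simply declares these limits ``clear''; your treatment makes transparent why the uniform-continuity hypotheses are needed. Second, to produce a point where $f''>0$, the paper argues by contradiction (if $f''\le 0$ on the whole tail then $f'$ is nonincreasing from $0$, forcing $f\to-\infty$), while you use the cleaner observation that $\int_\theta^\infty f''=\lim f'-f'(\theta)=0$ together with $f''\not\equiv 0$. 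Your explicit compactness argument for attainment of the suprema is also an addition; the paper asserts attainment without comment. Finally, you work with the weaker condition $f''(\theta)\le 0$, whereas the paper's intermediate-value step tacitly relies on the strict inequality $f''(\theta)<0$; locating $\mathrm{PInf}_r$ directly as the interior minimizer of $f'$ sidesteps that issue.
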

\begin{proof}
  The proof is given in Appendix \ref{theoproof:1}.
    
\end{proof}

For unimodal distributions with support on $[C,\infty]$ and mode in $\theta>C$, the same argument can be used to show that an inflection point and PMConv exist for $x>\theta$. When the unique mode is at $C$, the boundary of the support, we further require that $f''(C)<0$; otherwise, these delimiting points may not exist for $x>C$. An example is the exponential distribution, which has a positive second derivative for $x\geq0$, with a maximum at $x=0$, and therefore, no inflection point, and the point of maximum convexity is at 0. The following proposition indicates that the modal region $]\text{PInf}_l,\text{PInf}_r[$ is smaller than $]\text{PMConv}_l,\text{PMConv}_r[$.

\begin{prop}\label{prop:1}
Under the conditions of Theorem \ref{theo:1}, if,  to the right of the mode, there is only one inflection point, $\text{PInf}_r$, and one point of maximum convexity $\text{PMConv}_r$, then  $\text{PMConv}_r>\text{PInf}_r$. Similarly, at the left of the mode, $\text{PMConv}_l<\text{PInf}_l$.    
\end{prop}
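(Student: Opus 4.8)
The plan is to exploit the fact that, to the right of the mode, the inflection point is precisely the global minimizer of $f'$, whereas the point of maximum convexity is the global maximizer of $f''$; these cannot coincide because $f''$ vanishes at the former but is strictly positive where it is largest. First I would fix sign conventions. Since $f$ is unimodal with mode $\theta$, it is strictly decreasing on $]\theta,\infty[$, so $f'(t)<0$ there and $|f'(t)|=-f'(t)$. Consequently
\begin{equation*}
\text{PInf}_r=\argmax_{t>\theta}|f'(t)|=\argmin_{t>\theta} f'(t),
\end{equation*}
so $\text{PInf}_r$ is the point at which $f'$ attains its global minimum on $]\theta,\infty[$, which exists and is an interior point by the argument of Theorem~\ref{theo:1}.

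Next I would determine the sign of $f''$ on either side of $\text{PInf}_r$. As an interior minimizer of the differentiable function $f'$, it satisfies the first-order condition $f''(\text{PInf}_r)=0$, so $\text{PInf}_r$ is a root of $f''$. By hypothesis it is the \emph{only} inflection point for $t>\theta$, hence the only root of $f''$ there, and therefore $f''$ keeps a constant sign on each of the intervals $]\theta,\text{PInf}_r[$ and $]\text{PInf}_r,\infty[$. Because $f'$ decreases from its value at $\theta$ down to the minimum at $\text{PInf}_r$ and then increases back toward $0$, this forces $f''<0$ on $]\theta,\text{PInf}_r[$ and $f''>0$ on $]\text{PInf}_r,\infty[$.

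Finally I would conclude. The quantity $\text{PMConv}_r=\argmax_{t>\theta} f''(t)$ exists by Theorem~\ref{theo:1}, and its maximal value is strictly positive, since $f''>0$ somewhere to the right of $\text{PInf}_r$. As $f''\le 0$ throughout $]\theta,\text{PInf}_r]$, the maximizer cannot lie in that interval, and hence $\text{PMConv}_r>\text{PInf}_r$. The corresponding claim for the left tail follows by the mirror argument, or by applying the right-tail result to $-X$.

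The step I expect to be the main obstacle is the middle one: rigorously ruling out that $f''$ is zero or positive immediately to the right of the mode. This is exactly where the single-inflection-point hypothesis is essential, as it guarantees that $f''$ does not change sign before $\text{PInf}_r$; combined with the observation that $f'$ genuinely decreases on $]\theta,\text{PInf}_r[$ because $\text{PInf}_r$ is its global minimizer, it pins down the sign pattern. Everything else is bookkeeping with signs.
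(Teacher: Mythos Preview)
Your proposal is correct and follows essentially the same approach as the paper: both arguments use the uniqueness of the inflection point to pin down the sign pattern $f''<0$ on $]\theta,\text{PInf}_r[$ and $f''>0$ on $]\text{PInf}_r,\infty[$, and then conclude that the maximizer of $f''$ must lie strictly to the right of $\text{PInf}_r$. The only cosmetic difference is that the paper anchors the sign pattern on $f''(\theta)<0$ from Theorem~\ref{theo:1}, whereas you derive it from the characterization of $\text{PInf}_r$ as the interior minimizer of $f'$; these are equivalent routes to the same conclusion.
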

\begin{proof}
  The proof is given in Appendix \ref{propproof:1}.
\end{proof}

\section{Estimation}  \label{sect:pmcurv}
The inflection points and points of maximum convexity are related to the extremums of the first and second derivatives of the pdf, respectively. Therefore, we need to study

\begin{equation}\label{eq:maximiser}
    \theta^r = \argmax_{-\infty<x<\infty} f^{(r)}(x)
\end{equation} 
We present an estimator for these maximizers and discuss their uniform consistency. Finally, we present a simulation study and highlight some difficulties in estimating PMCurv and PInf.

For the inflection point, we are also interested in computing $\argmin_x f'(x)$, and it is straightforward to extend the arguments below to this case. Also, for the unimodal distributions of Section \ref{sect:examples}, there are two local maximums for the first and second derivatives that we are interested in, one pertaining to the left tail and the other to the right tail, and this case, we can find unique maximizers for each tail by constraining the optimization region of \eqref{eq:maximiser} to be at the right or left of the mode.

The estimation of $\theta^r$ is ultimately related to the estimation of the probability density function $f(x)$ and its derivatives. Let $X_1, \dotsc, X_n$ be independent and identically distributed with distribution function $F(x) = \int_{-\infty}^x f(u) du$. \cite{rosenblatt1956remarks} considered nonparametric estimators of the form

$$
f_n(x)=\frac{1}{n} \sum_{i=1}^n \frac{1}{h_n} K\left(\frac{x-X_i}{h_n}\right)
$$
where the kernel function $K(x)$ is a probability density function and $h_n$ is the bandwidth which goes to 0 as $n$ increases. Usually, $K(x)$ satisfies the following conditions

$$
\int K(x) d x=1, \\
\int x K(x) d x=0 \text { and } \\
\int x^2 K(x) d x \neq 0
$$

Properties of this estimator, including uniform consistency and asymptotic normality, are derived in \cite{parzen1962estimation}. \cite{schuster1969estimation} studies the uniform convergence of the derivatives of $f_n(x)$,

$$
f_n^{(r)}(x)=\frac{1}{n} \sum_{i=1}^n \frac{1}{h_n^{r+1}} K^{(r)}\left(\frac{x-X_i}{h_n}\right)
$$
The choice of bandwidth $h_n$  is discussed in \cite{hardle1990bandwidth} and \cite{politis2015adaptive}. Sample versions of the points of inflection and maximum convexity can then be obtained by replacing the pdf $f$ in  \eqref{eq:sample1} and \eqref{eq:sample2} by its estimator $f_n$.

We assume $\int |u|K(u)du$ is finite and $K^{(r)}(x)$ is a continuous function of bounded variation. These conditions are satisfied, for instance, when $K(x)$ is the density function of a standard Normal distribution. We further assume that $f^{(r)}(x)$ is uniformly continuous and possesses a maximiser $\theta^{r}$ defined by

$$
f(\theta^r) = \max_{-\infty<x<\infty}f^{(r)}(x)
$$
and that this maximizer is unique. An estimator for $\theta^r$
is given by
\begin{equation*}\label{eq:thetanr}
    \theta_n^r = \argmax_{-\infty<x<\infty}f_n^{(r)}(x)
\end{equation*}
and in the following theorem, we show the consistency of $\theta_n^r$ as an estimator of $\theta^r$.

\begin{theorem}\label{theo:2}
Consistency of $\theta_n^r$ as an estimator of  $\theta^r$. If $h_n$ is a function of $n$ satisfying:
$$
\lim_{n\to \infty} n h_n^{2r+2} = \infty
$$
then for every $\epsilon>0$
$$
P(|\theta_n^{r}-\theta^r|>\epsilon) \to 0 \ \ \ \ \text{as} \ n \to \infty
$$
\end{theorem}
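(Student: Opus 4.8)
The plan is to derive the result from the uniform consistency of the derivative estimator $f_n^{(r)}$ combined with a standard argmax-continuity argument. The guiding principle is that an estimator whose objective uniformly tracks $f^{(r)}$ must place its maximizer near the (assumed unique) maximizer of $f^{(r)}$. Concretely, I would first establish
\[
\sup_{-\infty<x<\infty}\left| f_n^{(r)}(x) - f^{(r)}(x) \right| \xrightarrow{P} 0,
\]
which is essentially the content of \cite{schuster1969estimation} under the stated hypotheses on $K$ and $f^{(r)}$, and then upgrade this uniform convergence of the objective into convergence of its argmax. The bandwidth condition $n h_n^{2r+2}\to\infty$ will enter precisely in controlling the stochastic part of this uniform convergence.

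For the uniform convergence, I would split $f_n^{(r)} - f^{(r)} = \big(f_n^{(r)} - \mathbb{E} f_n^{(r)}\big) + \big(\mathbb{E} f_n^{(r)} - f^{(r)}\big)$ into a stochastic and a bias term. For the bias, writing $\mathbb{E} f_n^{(r)}(x) = \int K(v)\, f^{(r)}(x - h_n v)\, dv$ (transferring the $r$ derivatives onto $f$ in the convolution), the difference $\int K(v)\,[\,f^{(r)}(x-h_n v) - f^{(r)}(x)\,]\, dv$ vanishes uniformly in $x$ by the uniform continuity of $f^{(r)}$, splitting the integral into $|v|\le M$ and $|v|>M$ and using $\int |v| K(v)\, dv < \infty$ to control the tail. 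For the stochastic term, the bounded-variation assumption on $K^{(r)}$ is the workhorse: integrating by parts in $f_n^{(r)}(x) - \mathbb{E} f_n^{(r)}(x) = h_n^{-(r+1)} \int K^{(r)}\big((x-u)/h_n\big)\, d(F_n - F)(u)$ and noting that the boundary terms vanish gives the deterministic bound
\[
\sup_x \left| f_n^{(r)}(x) - \mathbb{E} f_n^{(r)}(x) \right| \le \frac{\mathrm{TV}(K^{(r)})}{h_n^{r+1}} \, \sup_u \left| F_n(u) - F(u) \right|,
\]
since the total variation of $u \mapsto K^{(r)}\big((x-u)/h_n\big)$ equals that of $K^{(r)}$ and is thus independent of $x$ and $h_n$. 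By Glivenko--Cantelli (or Dvoretzky--Kiefer--Wolfowitz), $\sup_u | F_n - F | = O_P(n^{-1/2})$, so this term is $O_P\big( (n h_n^{2r+2})^{-1/2} \big)$, which is exactly where the hypothesis forces it to zero.

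With uniform convergence in hand, I would finish with the argmax argument. Uniqueness of $\theta^r$ together with the decay of $f^{(r)}$ at infinity yields a well-separated maximum: for every $\epsilon>0$,
\[
\delta := f^{(r)}(\theta^r) - \sup_{|x-\theta^r|\ge \epsilon} f^{(r)}(x) > 0.
\]
On the event $\{ \sup_x |f_n^{(r)} - f^{(r)}| < \delta/2 \}$, every $x$ with $|x-\theta^r|\ge\epsilon$ satisfies $f_n^{(r)}(x) < f^{(r)}(\theta^r) - \delta/2 < f_n^{(r)}(\theta^r)$, so the maximizer $\theta_n^r$ must lie in $(\theta^r-\epsilon,\theta^r+\epsilon)$. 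Since that event has probability tending to one, $P(|\theta_n^r - \theta^r| > \epsilon) \to 0$.

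The hard part will be the well-separated-maximum step over the unbounded domain $\mathbb{R}$, not the uniform convergence: uniqueness of the maximizer alone does not guarantee a strictly positive gap $\delta$ unless $f^{(r)}$ cannot approach its maximal value along a sequence escaping to infinity. This is where I would invoke that $f^{(r)}(x)\to 0$ as $|x|\to\infty$ --- which follows from the uniform continuity of $f^{(r)}$ and the vanishing of the lower-order derivatives (a Barbalat-type argument, starting from the integrable density $f\to 0$). A secondary technical point is verifying that the total variation of the reparametrized kernel is genuinely free of $x$ and $h_n$, so that the entire bandwidth dependence is captured by the explicit $h_n^{-(r+1)}$ factor, producing the sharp condition $n h_n^{2r+2}\to\infty$.
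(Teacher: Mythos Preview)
Your proof is correct and follows essentially the same two-step strategy as the paper: uniform convergence of $f_n^{(r)}$ to $f^{(r)}$ (Schuster, 1969) followed by a Parzen-style argmax argument exploiting a well-separated unique maximum. The paper simply quotes Schuster's exponential tail bound as a black box and then bounds $|f^{(r)}(\theta_n^r)-f^{(r)}(\theta^r)|$ by $2\sup_x|f_n^{(r)}-f^{(r)}|$ via the triangle inequality, whereas you unpack the uniform-convergence proof through the bias/variance split with DKW and --- more carefully than the paper --- explicitly justify why the maximum is well-separated over all of $\mathbb{R}$ by invoking $f^{(r)}(x)\to 0$ at infinity, a point the paper glosses over by asserting it follows from uniform continuity and uniqueness alone.
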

\begin{proof}
The proof is given in Appendix \ref{theoproof:2}.
\end{proof}

\subsection{Simulation study}  

We utilize the Gaussian kernel because it has derivatives of all orders and simplifies the required mathematical computations. The derivatives are expressed as \(K^{(r)}(x)=(-1)^r H_r(x) K(x)\), where \(H_r(x)\) represents the \(r\)th Hermite polynomial. The initial five Hermite polynomials are denoted as \(H_0(x)=1, H_1(x)=x, H_2(x)=x^2-1, H_3(x)=x^3-3x\), and \(H_4(x)=x^4-6x^2+3\). Therefore, an estimator for the \(r\)th derivative of the density function is given by
\[
f_n^{(r)}(x)=\frac{(-1)^r}{\sqrt{2 \pi} n h^{r+1}} \sum_{i=1}^n H_r\left(\frac{x-X_i}{h}\right) \exp\left({-\frac{1}{2}\left(\frac{x-X_i}{h}\right)^2}\right)
\]
We chose $h_n$ to minimize the asymptotic mean integrated squared error (AMISE), as detailed in \cite{guidoum2020kernel} and \cite{siloko2019note}. Namely, we have:
\begin{equation}\label{eq:hAMISE}
h_n^r = \left[\frac{(2 r+1) R\left(K^{(r)}\right)}{\mu_2(K)^2 R\left(f^{(r+2)}\right)}\right]^{\frac{1}{2 r+5}} \times n^{-\frac{1}{2 r+5}}
\end{equation}
where $R(g) = \int g(x)^2 dx$. There are several estimators available for $R\left(f^{(r+2)}\right)$, such as those discussed in \cite{guidoum2020kernel}. However, in this simulation study, we utilize the true value $R\left(f^{(r+2)}\right)$ because the density function $f$ of the simulated data is known.

We simulate $n$ observations from a Student's $t$-distribution  with $\nu$ degrees of freedom and consider the simulation parameters $n \in \{100,500,2000\}$ and $\nu \in \{1,5,100\}$. For each configuration, we repeat the simulation $N=1000$ times and compute the sample inflection point and sample point of maximum convexity of the right tail using $\argmax_{x>0} - f_n'(x)$ and $\argmax_{x>0} f_n''(x)$, respectively. The mean squared errors (MSE) are given in Tables \ref{table:1} and \ref{table:2}, and, as expected, the MSE decreases with $n$ and is larger for the sample version of $\text{PMConv}_r$ compared to $\text{PInf}_r$ as the former involves higher derivatives. The MSE also decreases with $\nu$ since, as the distributions become closer to a Gaussian, the derivatives $f'(x)$ and $f''(x)$ fluctuate less near the origin and can be more accurately estimated by kernel density methods.

Figures~\ref{fig:kde0}, \ref{fig:kde1}, and \ref{fig:kde2} show the estimated density functions and their first two derivatives for simulated data with parameters $n=2000$ and $\nu=1$. We can see in Figure~\ref{fig:kde2} that the estimates over-smooth the second derivative, and to better capture the true shape of the second derivative near 0 and the PMCurv, we would need a smaller bandwidth than the one given by \eqref{eq:hAMISE}. A potential solution is offered by implementing adaptive bandwidth techniques \citep{politis2015adaptive} where the bandwidth depends on the location. 

\begin{table}
\centering
\begin{tabular}{cccc}
         & $\nu=1$ & $\nu=5$ & $\nu=100$ \\ \hline
$n=100$  & 0.245   & 0.181   & 0.189     \\
$n=500$  & 0.113   & 0.076   & 0.062     \\
$n=2000$ & 0.054   & 0.046   & 0.035    
\end{tabular}
\caption{MSE for the estimated inflection points considering $n$ simulated observations from a Student's $t$-distribution with $\nu$ degrees of freedom. }
\label{table:1}
\end{table}

\begin{table} 

\centering
\begin{tabular}{cccc}
         & $\nu=1$ & $\nu=5$ & $\nu=100$ \\ \hline
$n=100$  & 1.144   & 0.893   & 0.245     \\
$n=500$  & 0.689   & 0.494   & 0.339     \\
$n=2000$ & 0.373   & 0.297   & 0.192    
\end{tabular}
\caption{MSE for the estimated points of maximum convexity considering $n$ simulated observations from a Student's $t$-distribution with $\nu$ degrees of freedom.}
\label{table:2}
\end{table}


\begin{figure}[H]
   \centering
   \includegraphics[width=0.65\linewidth]{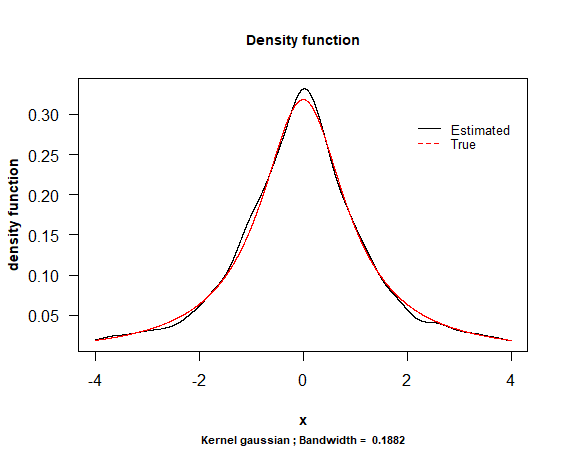}
   \caption{Density function of a Cauchy distribution estimated from 2000 simulated observations. } 
  \label{fig:kde0} 
\end{figure}

\begin{figure}[H]
   \centering
   \includegraphics[width=0.65\linewidth]{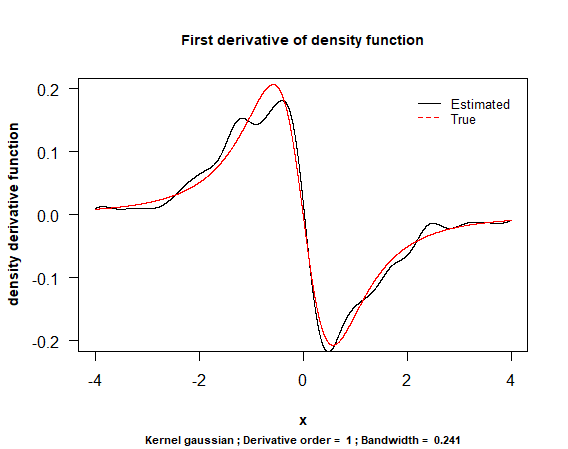}
   \caption{First derivative of the density function of a Cauchy distribution estimated from 2000 simulated observations.}
  \label{fig:kde1} 
\end{figure}

\begin{figure}[H]
   \centering
   \includegraphics[width=0.65\linewidth]{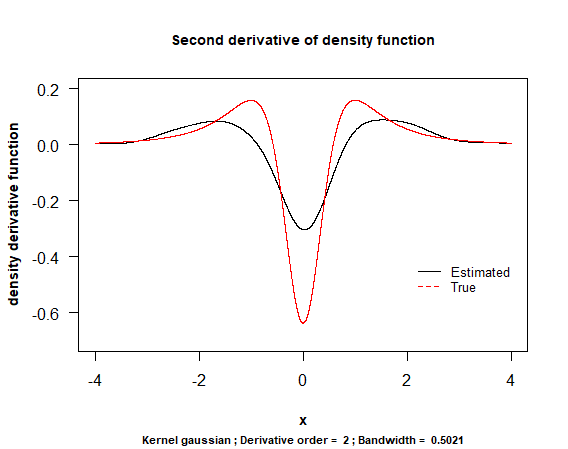}
   \caption{Second derivative of the density function of a Cauchy distribution estimated from 2000 simulated observations} 
  \label{fig:kde2} 
\end{figure}

\section{Examples} \label{sect:examples}

In this section, we investigate the delimiting points of several known distributions based on the methods described in Section \ref{sect:curvature}. Following the discussion in Section \ref{sect:curvature}, we consider distributions with support on $\mathbb{R}^+$ with mode larger than 0 (Log-Normal distribution), with mode at 0 (exponential distribution) and several unimodal distributions with support on $\mathbb{R}$ (Gaussian, Student's $t$-distribution and Skew-$t$ distribution). We compare these delimiting points with measures of heavy-tailedness and asymmetry, namely the kurtosis and extreme quantiles.


\subsection{Log-Normal distribution}\label{eq:lognormal}


Figure~\ref{fig:lognormalpdf} shows the density function of the Log-Normal distribution, its second derivative, and the delimiting points. The point of maximum convexity for the left tail is $\text{PMConv}_l = e^{\mu - 2\sigma^2-\sigma\sqrt{3+\sigma^2}}$ and for the right tail is $\text{PMConv}_r = e^{\mu - 2\sigma^2+\sigma\sqrt{3+\sigma^2}}$. These delimiting points are used to define the modal region $[\text{PMConv}_l, \text{PMConv}_r]$, the left tail interval $[0,\text{PMConv}_l[$ and the right tail interval $[\text{PMConv}_r,\infty]$. Likewise, the modal region based on the inflection points is given by:

$$[e^{\frac{1}{2} \left(2 \mu -3 \sigma ^2-\sigma\sqrt{\sigma ^2+4}  \right)}, e^{\frac{1}{2} \left(2 \mu -3 \sigma ^2+\sigma\sqrt{\sigma ^2+4}  \right)}]$$

The larger $\sigma$, the slower the tails of the density function decay, and the higher the kurtosis is ($\text{Kurt} = 3 e^{2 \sigma ^2}+2 e^{3 \sigma ^2}+e^{4 \sigma ^2}-3$). On the other hand, the modal regions defined by the inflection point and point of maximum convexity shrink, and the delimiting points for the tails get closer to the mode. Moreover, the 95\% quantile is given by $e^{\mu +1.64485 \sigma }$, and thus, if one uses the quantiles, the modal region widens as we increase $\sigma$, having the opposite behavior as the modal region defined by the previous delimiting points (see Figure \ref{fig:ln2}).

\begin{figure}[H]
   \centering
\includegraphics[width=\linewidth]{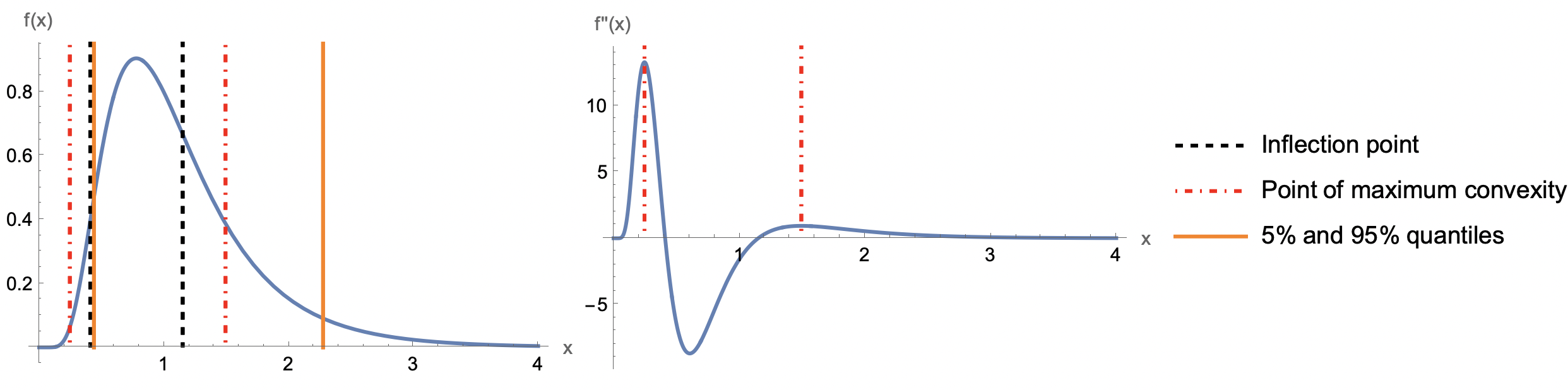}
\caption{Density function and its second derivative of a Log-Normal distribution with parameters $\mu=0, \ \sigma=0.5$. The black, red, and orange vertical lines are drawn in correspondence to the $\text{PInf}$, $\text{PMConv}$, and the 5\% and 95\% quantiles, respectively. }
\label{fig:lognormalpdf}
\end{figure}


\begin{figure}[H]
   \centering
\includegraphics[width=\linewidth]{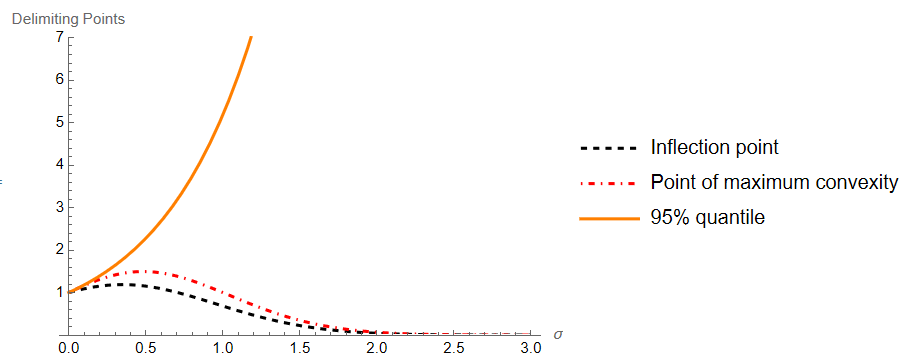}
\caption{Delimiting points for the right tail as a function of $\sigma$ for a Log-Normal distribution with parameters $\mu=0$ and $\sigma$.}
\label{fig:ln2}
\end{figure}


\subsection{Exponential distribution}

As shown in Figure~\ref{fig:exp1}, the density of the Exponential distribution has a singularity at 0 and no inflection points. For this distribution, $\text{PMConv}=0$ since the second derivative of the pdf has a maximum at 0, but $\text{PMCurv}=\log(2\lambda^6)/(2\lambda)$ for $\lambda>2^{-1/6}$, and is 0 otherwise, where $\lambda$ is the rate parameter. Thus, it would be more appropriate to define the modal region $[0,t_r]$ and right tail $[t_r,\infty]$ in terms of the quantiles (see Figure~\ref{fig:exp2}).

\begin{figure}[H]
   \centering
\includegraphics[width=\linewidth]{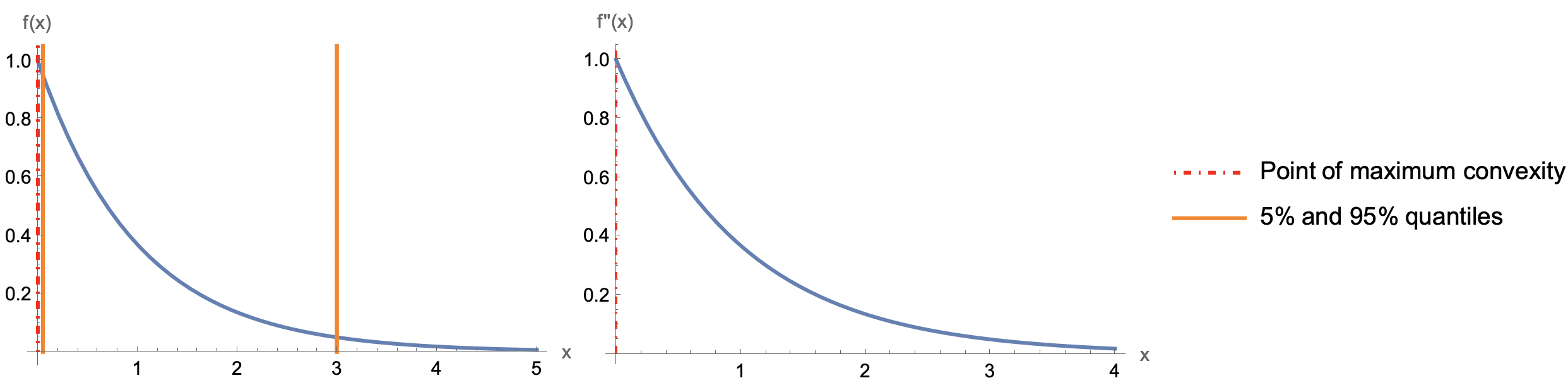}
\caption{Density function and its second derivative of an Exponential distribution with rate parameter 1. The red and orange vertical lines are drawn in correspondence with the $\text{PMConv}$ and the 5\% and 95\% quantiles, respectively.}
\label{fig:exp1}
\end{figure}

\begin{figure}[H]
   \centering
\includegraphics[width=\linewidth]{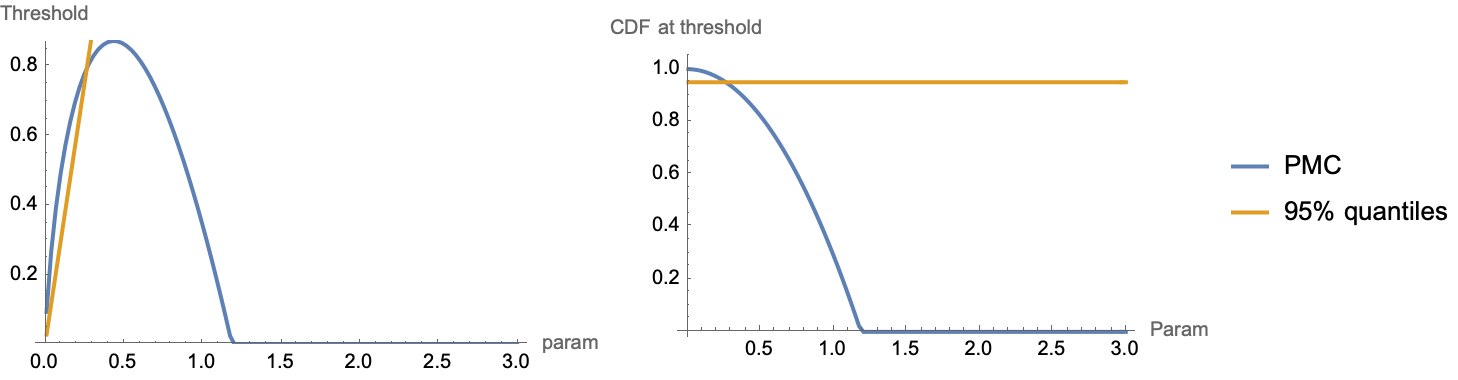}
\caption{$\text{PMCurv}$ and 95\% quantile of an exponential distribution with varying scale parameter.}
\label{fig:exp2}
\end{figure}

\subsection{Gaussian distribution}

Figure \ref{fig:gauss1} shows the density of the Gaussian distribution, where the inflection points are exactly one standard deviation from the mean: $\mu \pm \sigma$ ($\text{PMInf}_r$ corresponds to the quantile 0.841). Now, the points of maximum convexity are located at $\sqrt{3}\sigma$ standard deviations from the mean: $\mu \pm \sqrt{3}\sigma$ ($\text{PMConv}_r$ corresponds to quantile 0.958). There is no closed-form expression for the PMCurv, although, for large $\sigma$, it tends to $\text{PMConv} = \mu \pm \sqrt{3}\sigma$, as can be seen in Figure \ref{fig:gauss2}. 


\begin{figure}[H]
   \centering
\includegraphics[width=\linewidth]{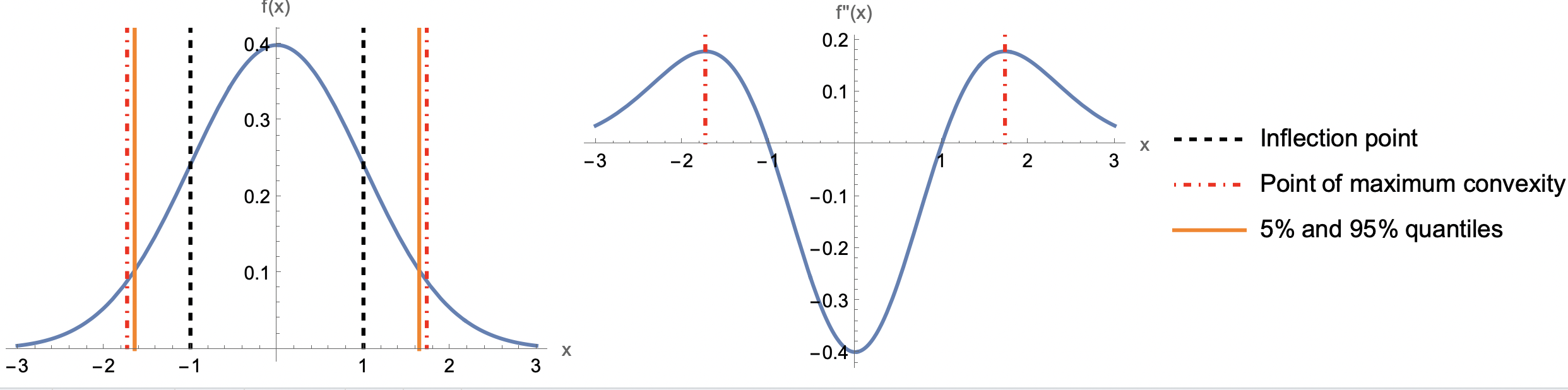}
\caption{Density function and its second derivative of a standard Normal distribution. The black, red, and orange vertical lines are drawn in correspondence with the $\text{PInf}$, $\text{PMConv}$, and the 5\% and 95\% quantiles, respectively.}
\label{fig:gauss1}
\end{figure}

\begin{figure}[H]
   \centering
\includegraphics[width=0.9\linewidth]{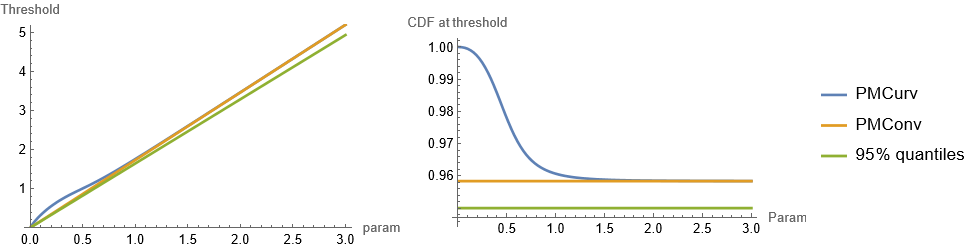}
\caption{PMCurv, PMConv and 95\% quantile of a Gaussian distribution with varying scale parameter.}
\label{fig:gauss2}
\end{figure}

\subsection{Student's $t$ distribution}

Figure~\ref{fig:studentt1} shows the Density function and its second derivative of a Student's $t$-distribution with 3 degrees of freedom. The points of inflection and maximum convexity are at $\pm \sqrt{\nu}/\sqrt{\nu+2}$ and $\pm \sqrt{3\nu}/\sqrt{\nu+2}$, respectively. The tails decay according to $x^{-\nu-1}$ and the kurtosis is $3+6/(\nu-4)$, when $\nu>4$. As $\nu$ decreases, the tail decays more slowly, and the kurtosis increases. Also, as $\nu$ decreases, the modal regions defined by $[\text{PInf}_l,\text{PInf}_r]$ and $[\text{PMConv}_l,\text{PMConv}_r]$ shrink, and the delimiting points for the tails get closer to the mode. On the other hand, the modal region defined by the 5\% and 95\% quantiles widens as the distributions become more heavy-tailed (see Figure \ref{fig:studentt2}). The previous inverse relationship between kurtosis and $\text{PInf}_r$ and $\text{PMConv}_r$ is present for many unimodal distributions, although an exact relationship is not straightforward to derive. For the Cauchy distribution, $\text{PMInf}_r$ and $\text{PMConv}_r$ correspond to the quantiles $2/3$ and $3/4$, respectively.

\begin{figure}[H]
   \centering
\includegraphics[width=\linewidth]{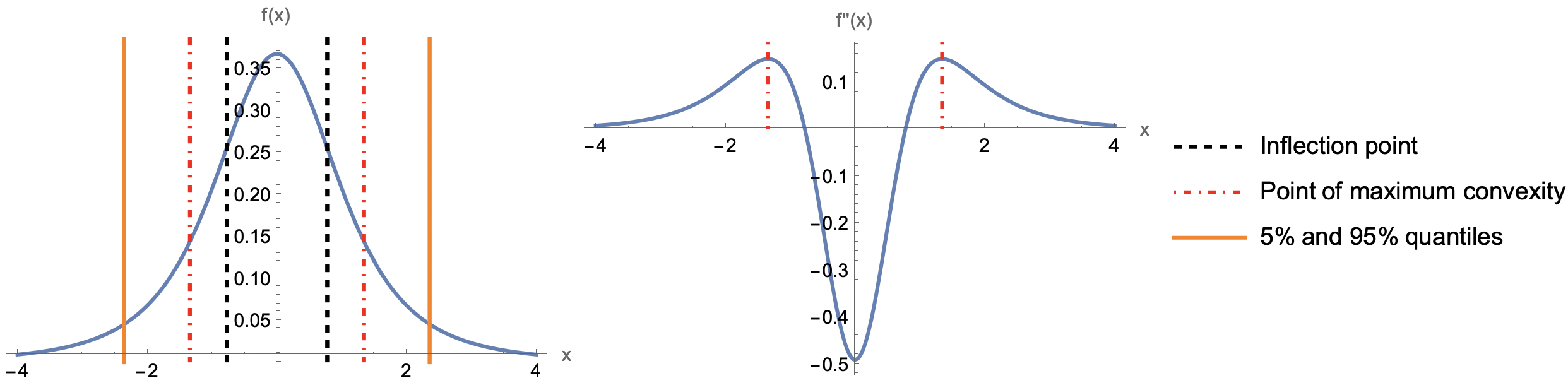}
\caption{Density function and its second derivative of a Student's $t$-distribution with 3 degrees of freedom. The black, red, and orange vertical lines are the $\text{PInf}$, $\text{PMConv}$, and the 5\% and 95\% quantiles, respectively.}
\label{fig:studentt1}
\end{figure}

\begin{figure}[H]
   \centering
\includegraphics[width=\linewidth]{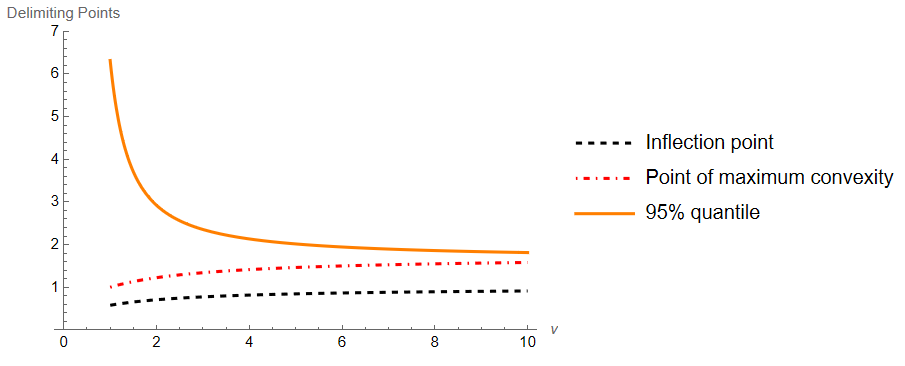}
\caption{Delimiting points for the right tail as a function of the degrees of freedom $\nu$ of a Student's $t$-distribution.}
\label{fig:studentt2}
\end{figure}


\subsection{Skew-$t$ distribution}


Figure~\ref{fig:skewt} shows the density function and its second derivative of a Skew-$t$ distribution with parameters $\nu=3$ and $s=10$. \cite{branco2001general} and \cite{azzalini2003distributions} define the density function of a skew-$t$ distribution as:

$$\frac{2}{\sigma} t(z ; \nu) T\left(s z \sqrt{\frac{\nu+1}{\nu+z^2}} ; \nu+1\right), \quad z=\frac{x-\mu}{\sigma},$$
where $t(x;\nu)$ and $T(x;\nu)$ are the pdf and cdf of a symmetric Student's $t$-distribution with $\nu$ degrees of freedom and scale parameter 1, where the parameter $s$ regulates the skewness. We compute the inflection points and point of maximum convexity numerically for $\mu=0$, $\sigma=1$, $s\in[-10,10]$, and $\nu \in [1,20]$. Figures~\ref{fig:skewt3} and \ref{fig:skewt2} illustrate that as $\nu$ and $s$ increase, $\text{PInf}_r$  and $\text{PMConv}_r$ move towards the center of the distribution. Interestingly, heightened levels of skewness bring the previous points closer to the mode. This behavior is also found for the Normal Inverse Gaussian distribution, which is a skewed and leptokurtic distribution \citep{barndorff1997normal}.


\begin{figure}[H]
   \centering
\includegraphics[width=\linewidth]{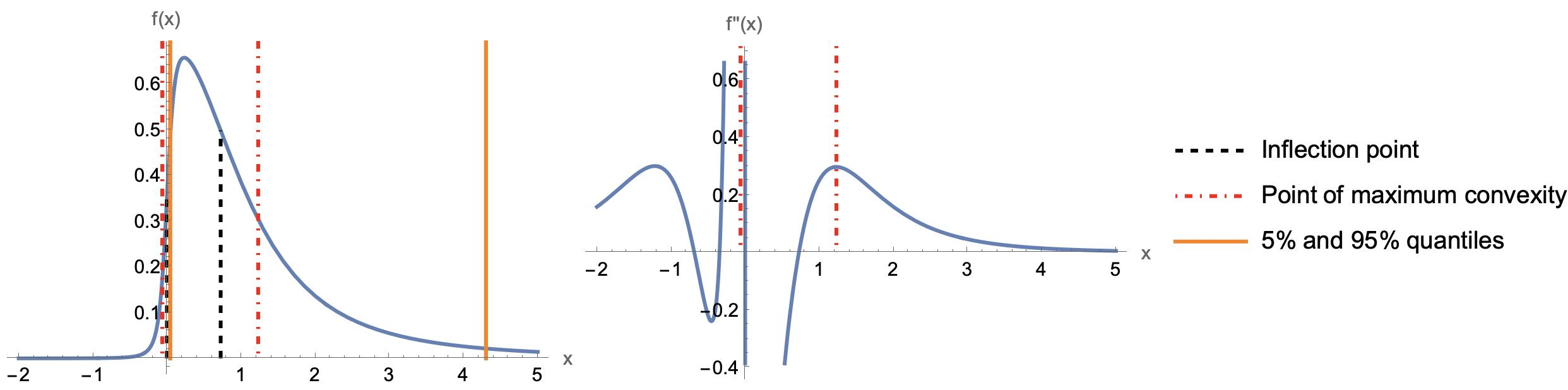}
\caption{Density function and its second derivative of a Skew-$t$ distribution with parameters $\nu=3$ and $s=10$. The black, red, and orange vertical lines are drawn in correspondence with the $\text{PInf}$, $\text{PMConv}$, and the 5\% and 95\% quantiles, respectively.}
\label{fig:skewt}
\end{figure}

\begin{figure}[H]
   \centering
\includegraphics[width=0.5\linewidth]{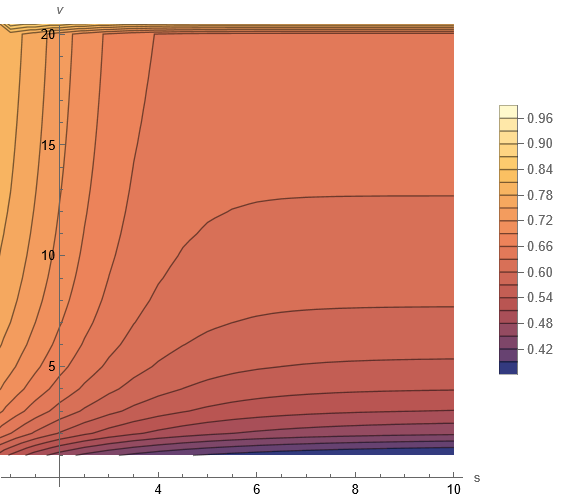}
\caption{Quantile corresponding to $\text{PInf}_r$ of a Skew-$t$ with parameters $\nu$ and $s$.}
\label{fig:skewt3}
\end{figure}

\begin{figure}[H]
   \centering
\includegraphics[width=0.5\linewidth]{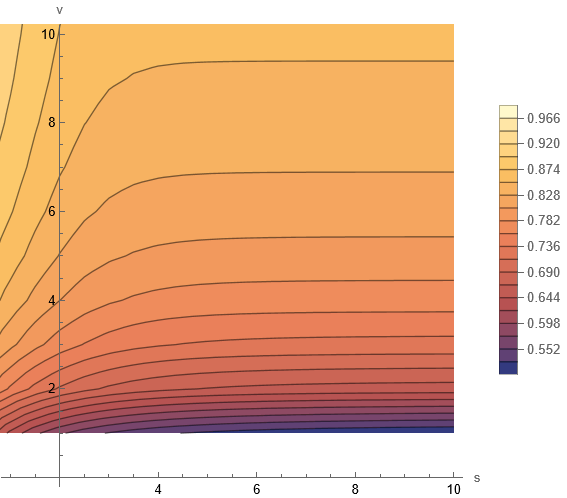}
\caption{Quantile corresponding to $\text{PConv}_r$ of a Skew-$t$ with parameters $\nu$ and $s$.}
\label{fig:skewt2}
\end{figure}

\section{Discussion} \label{sect:discussion}


This paper proposes using the inflection point, the point of maximum convexity (PMConv), and the point of maximum curvature (PMCurv) of probability density functions as the delimiting points between the bulk and the tail of the distribution. The inference of these delimiting points from the data is also discussed.

A further investigation into the relationships between these delimiting points and established measures of heavy-tailedness can provide valuable insights. Notably, it is observed that as kurtosis increases, the inflection point and point of maximum convexity tend to approach the mode and yield narrower modal regions. The previous observation holds for many common distributions—such as the Student's $t$-distribution, Inverse-Gaussian, Inverse-Gamma, and various sub-exponential distributions like the Log-Normal, Weibull, and Log-Gamma.   Conversely, the 5\% and 95\% quantiles generally lead to wider intervals when the kurtosis increases. 
The delimiting points for the right tail also tend to move closer to the mode with increasing skewness. This observation calls for further exploration into the precise connections between these statistical measures to offer a more nuanced understanding of distribution shapes and also how to extend these results to the multivariate case and multimodal distributions.

We also emphasize the importance of better understanding the properties of probability density function derivatives, a topic rarely touched upon in the literature (see, for instance, \citep{sato1981higher}). Particularly, for continuous and unimodal distributions, there appears to be a regular pattern in their derivatives,  which we exploit in the proof of Theorem~\ref{theo:1}.


\begin{appendices}

\section{Proof of Theorem \ref{theo:1}}  \label{theoproof:1}
\begin{proof}
      Since $\theta$ is the mode of the distribution we have $f'(\theta)=0$ and $f''(\theta)<0$.  We focus now on the shape of the probability function to the right of the mode. It is clear that $\lim_{|x|\to\infty}f^{(r)}(x)=0$ for $r=0,1,2$. 
    
    It then follows that there is a point $\omega>\theta$ such that $f''(\omega)>0$. If the previous assertion is false, then $f''(x)\leq 0$, for $x>\theta$, and consequently, since $f(x)$ is decreasing for $x>\theta$  ($f'(x)<0$ for $x>\theta$),  $\lim_{x\to\infty} f(x) = -\infty$, which contradicts the assumption that $f(x)$ is a density function. Now, since $f''(\theta)<0$, $f''(\omega)>0$, and $\lim_{x\to\infty}f''(x)=0$, then, due to the uniform continuity of $f''(x)$, there is a point $\theta<\text{PInf}_r<\omega$, such that $f''(\text{PInf}_r)=0$ (the second derivative changes sign), and $f''(x)$ possesses a $\text{PMConv}_r$, such that $f''(\text{PMConv}_r) = \max_{\theta<x<\infty}f''(x)$.

    A similar argument can be made for the existence of an inflection point and point of maximum curvature at the left of the mode.
\end{proof}

\section{Proof of Proposition \ref{prop:1}} \label{propproof:1}
\begin{proof} 
    We make use of the proof of Theorem \ref{theo:1}, and utilize the subscript $r$ in $\omega_r$, $\gamma_r$, and $\text{PMConv}_r$ to indicate that we are at the right of the mode, $x>\theta$. Since $f''(\theta)<0$, $f''(\text{PInf}_r)=0$, $f''(\omega_r)>0$, and $\theta<\text{PInf}_r<\omega_r$, then $f''(x)<0$ for $x\in \ ]\theta,\text{PInf}_r[$, and  $f''(x)>0$, for $x>\text{PInf}_r$. Therefore, since PMConv is in the interval $]\text{PInf}_r,\infty[$, then $\text{PMConv}_r>\text{PInf}_r$.     A similar argument can be used to show $\text{PMConv}_l<\text{PInf}_l$.
\end{proof}

\section{Proof of Theorem \ref{theo:2}}\label{theoproof:2}
\begin{proof}

We show next the asymptotic consistency of $f_n^{(r)}$, which was proven in \cite{schuster1969estimation} and will be required in this proof. If $f(x)$ and its first $r+1$ derivatives are bounded, then there exists positive constants $C_1$ and $C_2$ such that for every $\epsilon>0$,

\begin{equation}\label{eq:schuster}
    P\left\{\sup _x\left|f_n^{(r)}(x)-f^{(r)}(x)\right|>\epsilon\right\} \leq C_1 \exp \left(-C_2 n h_n^{2 r+2}\right),
\end{equation}
for sufficiently large $n$.

We now follow the proof in \cite{parzen1962estimation} related to the asymptotic consistency of the sample mode. Since $f^{(r)}(x)$ is uniformly continuous with an unique maximum then for every $\epsilon>0$ there exists an $\eta>0$ such that, for every point $x$, $|\theta^r-x|\geq\epsilon$ implies $|f(\theta^r)-f(x)| \geq \eta$. Thus, to prove $\theta_n^r$ converges to $\theta^r$ in probability, it suffices to show that for every $\epsilon > 0$,

\begin{equation} \label{eq:proof11}
    P(|f^{(r)}(\theta^r_n)-f^{(r)}(\theta^r)|>\epsilon) \to 0 \hspace{3cm} \text{as} \ n \to \infty.
\end{equation}

Now, since
$$
\left|f_n^{(r)}(\theta_n^r)-f(\theta^r)\right|=\left|\sup_x f_n^{(r)}(x)-\sup _x f^{(r)}(x)\right| \leq \sup_x\left|f_n^{(r)}(x)-f^{(r)}(x)\right|,
$$
then, by the triangular inequality, it follows that
\begin{align}
\left|f^{(r)}\left(\theta^r_n\right)-f^{(r)}(\theta^r)\right| &\leq \left|f^{(r)}(\theta^r_n)-f_n^{(r)}\left(\theta^r_n\right)\right|+| f_n^{(r)}(\theta^r_n)  -f^{(r)}(\theta^r)| \nonumber \\
 &\leq 2 \sup _x\left|f_n^{(r)}(x)-f^{(r)}(x)\right|. \nonumber
\end{align}
Finally, it follows that there is an upper of the probability in \eqref{eq:proof11}:
$$
P(|f^{(r)}(\theta^r_n)-f^{(r)}(\theta^r)|>\epsilon) \leq P\left(\sup _x\left|f_n^{(r)}(x)-f^{(r)}(x)\right|>\epsilon/2\right), 
$$
and from \ref{eq:schuster}, the probability on the right-hand side of the previous equation goes to 0 as $n\to\infty$.
\end{proof}

\end{appendices}

\bibliography{bib.bib}
\end{document}